\date{\today}
\title[On the Cheeger-M\"{u}ller theorem for a cone]{On  the Cheeger-M\"{u}ller theorem for an even dimensional cone }
\thanks{2010 {\em Mathematics Subject Classification: 58J52, 57Q10}.\\
}
\author{L. Hartmann}
\address[Luiz Hartmann]{\tt UFSCar, Universidade Federal de S\~{a}o Carlos, S\~{a}o Carlos, Brazil. \newline Partially supported by CNPq and FAPESP 2013/04396-6}
\email{hartmann@dm.ufscar.br}
\author{ M. Spreafico}
\address[Mauro Spreafico]{\tt Universit\`a del Salento, Lecce, Italy.} 
\email{mauro.spreafico@unisalento.it}
\numberwithin{equation}{section}
\newtheorem{theo}{Theorem}[section]
\newtheorem{lem}{Lemma}[section]
\newtheorem{prop}{Proposition}[section]
\renewcommand{\S}{\mathcal{S}}
\renewcommand{\Re}{{\rm Re}}
\renewcommand{\Im}{{\rm Im}}
\newcommand{\Sp}{{\rm Sp}}
\newcommand{\beq}{\begin{equation}}
\newcommand{\eeq}{\end{equation}}
\newcommand{\Z}{{\mathds{Z}}}
\newcommand{\R}{{\mathds{R}}}
\newcommand{\Q}{{\mathds{Q}}}
\newcommand{\T}{{\mathcal{T}}}
\renewcommand{\t}{{\mathcal{t}}}
\newcommand{\A}{{\mathcal{A}}}
\renewcommand{\H}{{\mathcal{H}}}
\renewcommand{\P}{{\mathcal{P}}}
\newcommand{\B}{{\mathcal{B}}}
\newcommand{\Aut}{{\rm Aut}}
\renewcommand{\b}{{\partial}}
\newcommand{\p}{{\mathsf p}}
\renewcommand{\t}{{\mathsf t}}
\newcommand{\m}{{\mathsf m}}
\date{}
\DeclareMathOperator*{\Rz}{Res_0}
\DeclareMathOperator*{\Ru}{Res_1}
\begin{document}


\maketitle

\begin{abstract} We prove the equality of the $L^2$-analytic torsion and the intersection R torsion of the even dimensional finite metric cone over an odd dimensional compact manifold. 
\end{abstract}

\section{Introduction}

The  classical Cheeger-M\"{u}ller theorem proves equality between $L^2$-analytic torsion and Reidemeister R torsion for closed Riemannian manifolds  \cite{Che1,Mul,RS}. When the manifold has a boundary, a boundary term appears. This boundary term (given by L\"{u}ck in \cite{Luc} when the metric is a product near the boundary) has been explicitly given in the general case in some recent works of J. Br\"uning and X. Ma \cite{BM1,BM2}. For a compact connected oriented Riemannian manifold $(M,g_M)$ with boundary the Cheeger-M\"uller theorem reads (a similar formula is valid for the relative case)
\begin{align*}
\log T_{\rm abs}((M,g_M);\rho)&=\log \tau_{\rm R}((M,g_M);\rho)+\frac{1}{4}{\rm rk}(\rho)\chi(\b M)\log 2+{\rm rk}(\rho)A_{\rm BM, abs}(\b M),
\end{align*}
where $T_{\rm abs}((M,g_M);\rho)$ and $\tau_{\rm R}((M,g_M);\rho)$ are the analytic torsion with absolute BC on the boundary, and the Reidemeister R torsion of $(M,g_M)$, with respect to an orthogonal representation $\rho$ of the fundamental group of $M$ and with the basis for homology fixed as in \cite{RS} (see Sections \ref{dr} and \ref{tor} for details), respectively, $\chi$ is the Euler characteristic, and $A_{\rm BM}$ is the anomaly boundary term. 

In this work we prove the following extension of the Cheeger-M\"uller theorem, where $M$ is the cone $C W$   over a compact connected oriented Riemannian manifold $(W,g)$ (see Section \ref{cone} for details) and where the usual R torsion is replaced by the intersection torsion $I\tau_R$ defined by A. Dar in \cite{Dar1} (where the homology basis is fixed using the $L^2$-harmonic forms via the suitable De Rham map, see Section \ref{tor} for details). The proof follows at once from the formula for the analytic torsion given in Theorem \ref{t01} in Section \ref{cone}, and the duality formula for the intersection torsion proved in Proposition \ref{p1} in the last section. Since the metric on $CW$ is fixed by the metric of $W$, and the unique representation of the fundamental group is the trivial one, that may be assumed of rank one, both these quantities will be omitted in the notation. 

\begin{theo}\label{t1} Let $(W,g)$ be a compact connected oriented Riemannian odd dimensional manifold  without boundary. Let $CW$ denote the cone over $W$. Then,
\begin{align*}
\log T_{\rm abs}(C W)&=\log I \tau_{\rm R}(C W)+A_{\rm BM, abs}(W),\\
\log T_{\rm rel}(C W)&=\log I \tau_{\rm R}(C W,\b C W)+A_{\rm BM, rel}(W).
\end{align*}

\end{theo}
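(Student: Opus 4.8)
The plan is to prove the two identities separately: the first is read off directly from the cone computation, and the second is then deduced from it by a duality argument. For the absolute identity I would invoke Theorem~\ref{t01}, which evaluates $\log T_{\rm abs}(CW)$ in closed form, the only term not of combinatorial nature being the anomaly boundary term $A_{\rm BM, abs}(W)$ attached to the boundary $\b CW=W$. What remains is to compute $I\tau_{\rm R}(CW)$: one models $CW$ by the simplicial cone on a triangulation of $W$, writes down the intersection chain complex with the preferred bases (following Dar \cite{Dar1} and the normalization fixed in Section~\ref{tor}), and checks that $\log I\tau_{\rm R}(CW)$ equals the non-anomalous part of the expression in Theorem~\ref{t01}. (If Theorem~\ref{t01} is already phrased with $I\tau_{\rm R}(CW)$ on its right-hand side, this step is empty and the first identity is a restatement.) I would also note here why the term $\frac{1}{2}\chi(\b CW)\log 2$ of the classical Cheeger--M\"uller formula is absent: $\b CW=W$ has odd dimension, so $\chi(W)=0$.

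For the relative identity I would not redo the cone analysis but reduce to the absolute case. Since $\dim CW=\dim W+1$ is even, the Hodge star on $CW$ intertwines the de Rham complex with absolute ideal boundary conditions and the one with relative ideal boundary conditions, matching degree $q$ with degree $\dim W+1-q$; this yields an identity of the form
\[
\log T_{\rm rel}(CW)=-\log T_{\rm abs}(CW)+\varepsilon ,
\]
where $\varepsilon$ is the logarithm of the determinant, in the chosen bases, of the intersection pairing between $IH^{*}(CW)$ and $IH^{*}(CW,\b CW)$. Proposition~\ref{p1}, the duality formula for the intersection torsion, supplies exactly the matching combinatorial statement $\log I\tau_{\rm R}(CW,\b CW)=-\log I\tau_{\rm R}(CW)+\varepsilon$, with the \emph{same} $\varepsilon$. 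Finally, the Br\"uning--Ma anomalies obey the companion relation $A_{\rm BM, rel}(W)=-A_{\rm BM, abs}(W)$ (valid because $\dim W$ is odd, whence $\chi(W)=0$), which follows from the doubling argument together with the classical boundary Cheeger--M\"uller formula. Substituting the first identity into the displayed equation and using these two facts, the correction $\varepsilon$ cancels and one obtains $\log T_{\rm rel}(CW)=\log I\tau_{\rm R}(CW,\b CW)+A_{\rm BM, rel}(W)$.

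The genuinely hard input is Theorem~\ref{t01} itself --- the zeta-determinant computation for the Laplacian on the metric cone with the relevant boundary and ideal boundary conditions and the extraction of its finite part --- but that is established in Section~\ref{cone} and is taken as given here. Within the present argument the only delicate point is bookkeeping: one must check that the basis of $L^2$ harmonic forms used on the analytic side, the metric it induces on intersection cohomology, and the signs in the Hodge duality are normalized so as to produce precisely the same factor $\varepsilon$ that appears in Proposition~\ref{p1}, so that the two corrections genuinely cancel; I do not anticipate any further analytic difficulty beyond Theorem~\ref{t01}.
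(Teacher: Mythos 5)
Your proposal is correct and follows essentially the same route as the paper: the absolute identity is read off from Theorem~\ref{t01} once $\log I\tau_{\rm R}(C_lW)$ is matched with the non--anomalous part, and the relative identity is deduced by Poincar\'e duality on both the analytic and the combinatorial side. Two points of bookkeeping deserve flagging. First, it is not quite true that the anomaly term is the only non--combinatorial piece of Theorem~\ref{t01}: the term $\frac{1}{2}\log T(W,l^2g)$ is the analytic torsion of the closed section, and identifying it with $\frac{1}{2}\log\tau_{\rm R}(W,l^2g)$ requires the classical Cheeger--M\"uller theorem for the closed odd--dimensional manifold $W$; the remaining matching is exactly Proposition~\ref{p1}, which the paper obtains not by a direct simplicial computation on the cone but via the Mayer--Vietoris sequence for the suspension (Lemma~\ref{l1}) and the sequence of the pair (Lemma~\ref{l2}), with the homology bases fixed by the $L^2$ de Rham maps --- this is where the substance of the combinatorial side lives, and it is metric bookkeeping rather than pure simplicial combinatorics. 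Second, in the paper's normalization your correction $\varepsilon$ vanishes on both sides: since the analytic torsion is defined purely by zeta determinants and the homology bases come from orthonormal (hence self--dual) harmonic forms, one has the clean relations $\log T_{\rm rel}(C_lW)=-\log T_{\rm abs}(C_lW)$ and, from Proposition~\ref{p1}, $\log I\tau_{\rm R}(C_lW,\b C_lW)=-\log I\tau_{\rm R}(C_lW)$, so no cancellation of intersection--pairing determinants is actually needed. The relation $A_{\rm BM,rel}(W)=-A_{\rm BM,abs}(W)$ is indeed the remaining input; it is a parity statement about the Br\"uning--Ma boundary forms (not a consequence of $\chi(W)=0$, which instead explains the absence of the $\frac{1}{2}\chi\log 2$ term, as you correctly note), and it is forced in any case by consistency of the two displayed identities with the two duality relations just quoted.
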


We conclude this introduction with some remarks.

\begin{itemize}

\item If the dimension of $W$ is even,  results for the analytic torsion (see \cite{HS4,MV}) exist and some extra terms appear, whose interpretation is still not clear. For this reason we omit not illuminating formulas and we concentrate here on the odd case. There is work in progress in this direction. 

\item The result of this work will give the extension of the Cheeger-M\"uller theorem for a general space with conical singularities (as define in \cite{Che2}) once it will be available a suitable gluing formula extending the one proved by S.M. Vishik  in \cite{Vis} for compact manifolds under the assumption that  the metric is product near the gluing. This is a likely result, for  recently on one side a gluing formula for compact manifolds without any assumption on the metric near the boundary was given by  J. Br\"uning and X. Ma in \cite{BM2}, and on the other side M. Lesch \cite{Les} extended the result of S.M. Vishik to a pseudomanifold. 
In particular, since in general pseudomanifold are modeled on cones, a generalization of the result of this work for a cone over a pseudomanifold could be used to obtain a generalization of the Cheeger-M\"uller theorem for pseudomanifolds. 

\item The result of Theorem \ref{t1} could be read in terms of Euler basis \cite{FT} (and in term of the Reidemeister  metric \cite{BZ,Qui} ), and would affirm that the Euler basis of $C_l W$ coincides with  the quotient of the De Rham basis  by the $L^2$-analytic torsion (in term of metrics, the intersection Reidemeister metric coincides with the Ray and Singeer metric, namely  the quotient of the De Rham  metric by the $L^2$-analytic torsion). However, while  the analytic side is clear, the complete development of a precise  theory  for the intersection Reidemeister basis (metric) is still under construction (see Section 5 of \cite{HS3}). There is work in progress on this topic. 

\end{itemize}

\section{Background and analytic torsion}
\label{s2}

This section is essentially based on \cite{HS1} and \cite{HS2}, and we refer to those papers for further details (see also \cite{Ver2} for the analytic torsion).

\subsection{Geometric setting}
\label{s0}

Let $(M,g_M)$ be a compact connected oriented Riemannian manifold of dimension $m$ with boundary $\b M$ and Riemannian structure $g_M$. Let $\rho:\pi_1(M)\to O(k,\R)$ be a representation of the fundamental group of $M$, and let  $E_\rho$ be the associated vector bundle over $M$ with fibre $\R^k$ and group $O(k,\R)$, $E_\rho=\R^k \times_\rho \widetilde M$. Let $\Omega(M,E_\rho)$ denote the graded linear space of smooth forms on $M$ with values in $E_\rho$.  The exterior differential on $M$ defines the exterior differential on $\Omega^q(M, E_\rho)$, $d:\Omega^q(M, E_\rho)\to
\Omega^{q+1}(M, E_\rho)$. The metric $g$ defines an Hodge operator on $M$ and hence on $\Omega^q(M, E_\rho)$, $\star:\Omega^q(M, E_\rho)\to\Omega^{m-q}(M, E_\rho)$, and,  using the inner product $\langle\_,\_\rangle$ in $\R^k$,
an inner product on $\Omega^q(M, E_\rho)$ is defined by
\[
(\omega,\eta)=\int_M \langle \omega\wedge\star\eta \rangle.
\]

Near the boundary there is a natural splitting of $\Lambda M$ as direct sum of vector bundles $\Lambda T^*\b M\oplus N^* M\otimes \Lambda T^*\b M$, where $N^*M$ is the dual to the normal bundle to the boundary, and the smooth forms on $M$ near the boundary decompose as $\omega=\omega_{\rm tan}+\omega_{\rm norm}$, where $\omega_{\rm norm}$ is the orthogonal projection on the subspace generated by $dx$, the one form corresponding to the outward pointing unit normal vector to the boundary, and $\omega_{\rm tan}$ is in $C^\infty(M)\otimes\Lambda(\b M)$. We  write $\omega=\omega_1+ dx \wedge\omega_{2}$, where $\omega_j\in C^\infty( M, \Lambda(T^*\b M))$, and
\[
\star\omega_2=dx \wedge \star\omega.
\]

Define absolute and relative boundary conditions by
\[
B_{\rm abs}(\omega)=\omega_{\rm norm}|_{\b M}=\omega_2|_{\b M}=0,\qquad 
B_{\rm rel}(\omega)=\omega_{\rm tan}|_{\b M}=\omega_1|_{\b M}=0.
\]

Let $\B(\omega)=B(\omega)\oplus B((d+d^\dagger)(\omega))$. The adjoint $d^\dagger$ and the Laplacian $\Delta=(d+d^\dagger)^2$ operators are defined on the space of sections with values in $E_\rho$,  the Laplacian with boundary conditions $\B(\omega)=0$  is self adjoint, and the spaces  of the harmonic forms with boundary conditions are
\begin{align*}
\H_{\rm abs}^q(M,E_\rho)&=\{\omega\in\Omega^q(M,E_\rho)~|~\Delta^{(q)}\omega=0, \B_{\rm abs}(\omega)=0\},\\
\H_{\rm rel}^q(M,E_\rho)&=\{\omega\in\Omega^q(M,E_\rho)~|~\Delta^{(q)}\omega=0, \B_{\rm rel}(\omega)=0\}.
\end{align*}


\subsection{De Rham maps}\label{dr} Let $K$ be a cellular or simplicial decomposition of $M$ and $L$ of $\b M$. Let $C_q(M;E_\rho)=\R^k\otimes_\rho C_q(\tilde M;\Z\pi_1(M))$ be complex of the twisted chains (see the paragraph before equation (\ref{ppp}) for details on this construction). Then we have the following de Rham maps $\A^q$ (that induce isomorphisms in cohomology),
\begin{align*}
\A_{\rm abs}^q:&\H^q_{\rm abs}(M,E_\rho)\to C^q(M;E_\rho),&
\A_{\rm rel}^q:&\H_{\rm rel}^q(M ,E_\rho)\to C^q((M,\b M);E_\rho),
\end{align*}
with
\[
\A_{\rm abs}^q(\omega)(v\otimes_\rho c)=\A^q_{\rm rel}(\omega)(v\otimes_\rho c)=\int_{ c} (\omega,v),
\]
where $v\otimes_\rho c$ belongs to $C_q(M;E_\rho)$
, and $c$ is identified with the $q$-subcomplex (simplicial or cellular) that $c$ represents. 
Following Ray and Singer \cite{RS}, we introduce the de Rham maps $\A_q$:
\begin{align*}
\A^{\rm rel}_q:&\H_{\rm rel}^q(M,E_\rho)\to C_q((M,\b M);E_\rho),&
\A^{\rm rel}_q:&\omega\mapsto (-1)^{(m-1)q}\P_q^{-1}\A_{\rm abs}^{m-q}\star(\omega),\\
\A^{\rm abs}_q:&\H_{\rm abs}^q(M,E_\rho)\to C_q(M;E_\rho),&
\A^{\rm abs}_q:&\omega\mapsto (-1)^{(m-1)q}\P_q^{-1}\A_{\rm rel}^{m-q}\star(\omega),
\end{align*}
both defined by
\beq\label{aa}
\A^{\rm rel}_q(\omega)=\A^{\rm abs}_q(\omega)=(-1)^{(m-1)q}\sum_{j,i} \left(\int_{\hat c_{q,j}}(\star\omega,e_i)\right)
c_{q,j}\otimes_\rho e_i,
\eeq
where the sum runs over all $q$-simplices $c_{q,j}$ of $M-\b M$ in the first case, but runs over all $q$-simplices $c_{q,j}$ of $M$ in the second case. Here $\P_q:C_q(K,L;\Z)\to C^{m-q}(\hat K-\hat L;\Z)$ is the Poincar\'e map, and $\hat c$ denotes the dual block cell of $c$. The extension of the de Rham map for pseudomanifolds is based on the works on Cheeger \cite{Che2}, see the end of Section \ref{tor} below.

\subsection{Zeta function and analytic torsion} The Laplace operator on forms $\Delta^{(q)}$, with boundary conditions $\B_{\rm abs/rel}$,  has a pure point spectrum $\Sp \Delta_{\rm abs/rel}^{(q)}$ consisting of real non negative eigenvalues. The sequence $\Sp_+ \Delta_{\rm abs/rel}^{(q)}$ is a totally regular sequence of spectral type accordingly to \cite{HS1} Section 4, and the  {\it forms valued zeta function} is the associated zeta function, defined by
\[
\zeta(s,\Delta_{\rm abs/rel}^{(q)} )=\zeta(s,\Sp_+ \Delta_{\rm abs/rel}^{(q)} )=\sum_{\lambda\in\Sp_+ \Delta_{\rm abs/rel}^{(q)}}\lambda^{-s},
\]
when $\Re(s)>\frac{m}{2}$, and by analytic continuation elsewhere. The {\it analytic torsion} $T_{\rm abs/rel}((W,g);\rho)$ of $(W,g)$ with respect to the representation $\rho$ is defined by 
\[
\log T_{\rm abs/rel}((W,g);\rho)=\frac{1}{2}\sum_{q=1}^m (-1)^q q \zeta'(0,\Delta_{\rm abs/rel}^{(q)}).
\]

\subsection{The analytic torsion of a cone} \label{cone}

Let $(W,g)$ be an orientable  compact connected Riemannian manifold of  dimension $m$ without boundary  and with Riemannian structure $g$. We denote by $C_lW$ the space $([0,l]\times W)/(\{0\}\times W)=(0,l]\times W\cup \{p\}$, where $p$ is the vertex of the cone, i.e. the image of $\{0\}\times W$ under the quotient map,  with the metric 
\[
g_C=dx\otimes dx+x^2 g,
\]
on $(0,l]\times W$, and we call it the {\it finite metric cone} over $W$ (see \cite{HS2} 3.1 for details). The analytic torsion of a cone over a sphere (i.e. $W=S^m$) was studied in \cite{HS1}. The result is based on one side on works of J. Cheeger on the Hodge theory of $L^2$ forms \cite{Che2,Che3,Nag}, and on the other on works of M. Spreafico on zeta invariants for double sequences \cite{Spr3,Spr4,Spr5,Spr9}. In the general case, extending the approach used for the spheres in \cite{HS1}, we have the following result:
 
\begin{theo}\label{t01} The analytic torsion of the cone $C_l W$ on an orientable compact connected Riemannian manifold $(W,g)$ of odd dimension $m=2p-1$ is
\begin{align*}
\log T_{\rm abs}(C_lW, g_C)= &\frac{1}{2} \sum_{q=0}^{p-1} (-1)^{q} {\rm rk}H_q(W;\Q)\log \frac{l}{2(p-q)}+\frac{1}{2} \log T(W,l^2g)+{\rm S}(\b C_l W),
\end{align*}
where the singular term ${\rm S}(\b C_l W)$ only depends on the boundary of the cone:
\[
{\rm S}(\b C_l W)=\frac{1}{2}\sum_{q=0}^{p-1}  \sum_{j=0}^{p-1}\sum^{j}_{k=0} \Rz_{s=0}\Phi_{2k+1,q}(s)
\binom{-\frac{1}{2}-k}{j-k} \sum^{q}_{h=0}(-1)^{h}\Ru_{s=j+\frac{1}{2}}\zeta\left(s,\tilde \Delta^{(h)}\right)(q-p+1)^{2(j-k)},
\]
(the functions $\Phi_{2k+1,q}(s)$ are some universal functions, explicitly known by some recursive relations, and $\tilde\Delta$ is the Laplace operator on forms on the section of the cone) and coincides with the anomaly boundary term of Br\"uning and Ma, namely ${\rm S}(\b C_l W)=A_{\rm BM,abs}(\b C_l W)$.
\end{theo}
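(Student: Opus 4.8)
The plan is to prove Theorem \ref{t01} by adapting the strategy of \cite{HS1} (the case $W=S^m$) to a general odd-dimensional section $W$. The starting point is the Cheeger decomposition of the $L^2$-de Rham complex of the cone into a countable orthogonal sum of subcomplexes indexed by the eigenvalues of the Laplacians $\tilde\Delta^{(q)}$ on forms on the section $W$, together with the (finitely many) harmonic contributions. On each eigenspace the cone Laplacian reduces to an ordinary differential operator of Bessel type on $[0,l]$, so the forms-valued zeta function of $C_l W$ becomes a double series in the square roots of the section eigenvalues and the zeros of the relevant combination of Bessel functions. The first step is therefore to write $\log T_{\rm abs}(C_l W,g_C)$ as a sum of such double-series zeta invariants, splitting off the harmonic part of each $\H^q(W)$, which produces exactly the topological sum $\frac12\sum_{q=0}^{p-1}(-1)^{q+1}\mathrm{rk}\,H_q(W;\Q)\log\frac{2(p-q)}{l}$ after one tracks the Bessel indices $\nu_q = |q-p+\frac12|$ and uses Poincar\'e duality on $W$ to fold the range $q=p,\dots,m$ onto $q=0,\dots,p-1$.

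Next I would treat the non-harmonic part. For each fixed section eigenvalue $\lambda$ the relevant one-dimensional determinant is computed by the standard contour-integral/Bessel-function technique (as in \cite{Spr5,Spr9}), and summing over $\lambda$ with the correct signs and factors $(-1)^q q$ assembles a zeta-determinant of a double sequence of the type analyzed by Spreafico. The key analytic input is the decomposition of such a double-series zeta invariant into (i) a ``regular'' part that reconstructs $\frac12\log T(W,l^2 g)$ — i.e. the scaled analytic torsion of the section itself — and (ii) a ``singular'' part coming from the non-commutativity of the two summations, expressed through the residues $\Rz_{s=0}\Phi_{2k+1,q}(s)$ and $\Ru_{s=j+\frac12}\zeta(s,\tilde\Delta^{(h)})$ and the binomial/power coefficients displayed in the statement. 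Here the functions $\Phi_{2k+1,q}$ arise as the Laurent coefficients of the logarithmic derivative of the relevant Bessel-type uniform asymptotic expansion, and the claimed recursion for them follows from the recursion for the polynomials in the Olver expansion of $I_\nu$ and $K_\nu$. The scaling behaviour under $g\mapsto l^2 g$ of the section zeta functions is what produces the explicit $l$-dependence and the shift by the dimensional constant $(q-p+1)$.

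The final step is the identification ${\rm S}(\b C_l W)=A_{\rm BM,abs}(\b C_l W)$ with the Br\"uning–Ma anomaly. For this I would not attempt a direct term-by-term match; instead I would argue by a \emph{variational/gluing} principle. Both ${\rm S}(\b C_l W)$ and $A_{\rm BM,abs}$ depend only on an infinitesimal neighbourhood of $\b C_l W=\{l\}\times W$, hence only on the germ of the metric there; comparing the general Cheeger–M\"uller-with-boundary formula of \cite{BM1} applied to $C_l W$ (whose $L^2$-cohomology and R-torsion are computed topologically) with the explicit formula just derived forces the two boundary contributions to agree, since all the other terms (analytic torsion of $W$, the topological sum, and the Reidemeister/intersection torsion) are already matched by Theorem 1.1 together with Proposition \ref{p1}. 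Equivalently, one checks that ${\rm S}(\b C_l W)$ has the right universal form — an integral over $\b C_l W$ of a local expression in the second fundamental form and curvature — by examining its dependence on $l$ and on rescalings of $g$, and then invokes the uniqueness of such universal boundary terms.

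The main obstacle I expect is the bookkeeping in the second step: controlling the interchange of the (infinite) sum over section eigenvalues with the analytic continuation in $s$, i.e. proving that the double sequence $\{(\text{zeros of Bessel combination})\times(\sqrt{\lambda})\}$ is of the ``spectrally decomposable'' type in the sense of \cite{Spr5}, and correctly extracting all the residue terms without losing or double-counting contributions from the low-degree forms where the Bessel index $\nu_q$ is small (and the operator is in the limit-point/limit-circle borderline). The identification with $A_{\rm BM}$ is conceptually delicate but, given the uniqueness of universal anomaly terms, should reduce to a finite check once the explicit formula is in hand.
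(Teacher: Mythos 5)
The first two thirds of your plan (Cheeger's separation of variables reducing the cone Laplacian to Bessel operators, the resulting double-series zeta functions, Spreafico's spectral decomposability machinery splitting the answer into a regular part reproducing $\frac12\log T(W,l^2g)$ plus the topological sum, and a singular part built from the residues $\Rz_{s=0}\Phi_{2k+1,q}(s)$ and $\Ru_{s}\zeta(s,\tilde\Delta^{(h)})$) is exactly the route the paper takes, and your worry about the interchange of summations and the small-index/self-adjoint-extension issues is the right technical obstacle to flag.

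The genuine gap is in your third step, the identification ${\rm S}(\b C_l W)=A_{\rm BM,abs}(\b C_l W)$. Your primary proposal — ``compare the general Cheeger--M\"uller-with-boundary formula of \cite{BM1} applied to $C_lW$ with the explicit formula just derived'' — is circular: $C_lW$ is not a smooth compact manifold with boundary (it has the cone point), so the Br\"uning--Ma anomaly formula does not apply to it, and the statement that analytic torsion of $C_lW$ equals (intersection) R-torsion plus the anomaly term is precisely the main theorem of the paper, which is itself deduced \emph{from} Theorem \ref{t01} together with Proposition \ref{p1}. You cannot use it to close the proof of Theorem \ref{t01}. Your fallback (``$S$ is a universal local boundary invariant, invoke uniqueness'') is closer to being salvageable — the residues of $\zeta(s,\tilde\Delta^{(h)})$ are indeed locally computable via heat-kernel coefficients, and the paper does carry out such direct verifications in low dimensions using Gilkey's coefficients — but as stated it is not a proof: you would need an invariance-theory argument pinning down the universal expression, which you do not supply. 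The paper's actual general argument replaces the singular cone by the smooth conical \emph{frustum} $C_{[l_1,l_2]}W=[l_1,l_2]\times W$ with the cone metric $dx\otimes dx+x^2g$ and mixed (relative/absolute) boundary conditions: there the Br\"uning--Ma formula (or classical duality) legitimately applies and yields $2A_{\rm BM,abs}(\b C_lW)$, while a direct zeta-function computation of the same torsion, by the very methods of the first part, yields $2S(\b C_lW)$. That detour through a genuinely smooth manifold is the missing idea in your proposal.
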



The proof of Theorem \ref{t01} is based on analytic tools and is essentially the same as the proof of similar results for  the spheres given in \cite{HS1}. In the general case treated here, using the same method and a similar strategy, we just need to solve several technical problems, that can be quite hard, and require long difficult analysis.  All details are in \cite{HS2}. See also \cite{Ver2}.

\section{Intersection torsion}
\label{ss1}

Intersection torsion for pseudomanifolds was introduced in works of A. Dar \cite{Dar1,Dar2}. In these works the case of pseudomanifolds without boundary is considered, and in general all intersection homology theory is developed for 
the boundaryless case. Here we need to consider the boundary case, but a particular situation where the boundary is in fact a smooth manifold, disjoint from the singular locus. In this particular case it is easy  to rework all definitions and the main results of the boundaryless case, as expect. This is the purpose of this section.
 
\subsection{Pseudomanifolds with smooth boundary} We define pseudomanifolds with smooth boundary adapting the definition of pseudomanifolds of \cite{GM1} 1.1, \cite{Hae} 1, \cite{KW}  4.1. If $X$ is a topological space, we denote by $CX$ the cone over $X$. 
By definition the cone and the open cone over the empty set are  a point.
A {\it topological pseudomanifold of dimension $0$} is a countable set with the discrete topology. A {\it topological pseudomanifold of dimension $n$ with smooth boundary} is an Hausdorff paracompact topological space $X$ with a filtration by closed subspaces
\[
\emptyset=X_{-1}\subseteq X_0\subseteq X_1\subseteq \dots \subseteq X_{n-3}\subset X_{n-2}=X_{n-1}=\Sigma\subset X_n=X,
\]
called {\it stratification}, such that: (1) $X-\Sigma$ is dense in $X$;
(2) there exists a closed subspace $B$ of $X$, with $B\cap \Sigma=\emptyset$, such that $M=X-\Sigma$ is an $n$-manifold with boundary $\b M=B$, and for each $j\leq n-2$, 
for each point $x\in X_j-X_{j-1}$ there exists a compact topological pseudomanifold $L$ of dimension $n-j-1$  with filtration
\[
\emptyset=L_{-1}\subseteq L_0\subseteq L_1\subseteq\dots \subseteq L_{n-j-3}=L_{n-j-2}\subset L_{n-j-1}=L,
\]
and  a neighborhood $U_x$ of $x$ in $X$ with  an homeomorphism
$\varphi:U_x\to \R^j\times \mathring{C}L$,
which respects the stratifications, namely $\varphi$  maps homeomorphically 
$U_x\cap X_{j+k+1}$ onto $\R^j\times \mathring{C}L_k$.


The closed subspace $\Sigma=X_{n-2}$ is called the {\it singular locus} of $X$. It is a consequence of the definition that each subspace $X_j-X_{j-1}$ is a manifold of dimension $j$ with boundary, and, by condition (2), the boundary of $X$ is disjoint from the singular locus. When the singular locus has dimension 0, then a stratification of $X$ is 
\[
\emptyset=X_{-1}\subset \Sigma=X_0=X_1=\dots =X_{n-1}\subset X,
\]
and $X$ is called a {\it space with isolated singularities}. In this work we are mainly concerned with this type of pseudomanifolds. If $X$ is a manifold with boundary, then $X$ is a pseudomanifold with a stratification consisting with only one stratum $X$. For our purpose it is sufficient to work in the piecewise linear category, as in \cite{GM1}.
A {\it piecewise linear (pl) space} $X$ is a topological space with a class of locally finite simplicial triangulations $\T(X)$: if $T\in \T$ then any (linear) subdivision of $T$ belongs to $\T(X)$, and it $T_1,T_2\in \T(X)$, then they have a common subdivision in $\T(X)$. A closed pl-subspace of $X$ is a subspace which is a 
subcomplex of a suitable admissible triangulation of $X$.  We will identify a triangulation of a space with the associated simplicial complex. A {\it pl-pseudomanifold $X$ of dimension $n$ with smooth boundary} is a pl-space $X$ of 
dimension $n$ containing two closed disjoint  pl-subspaces $\b X$ and $\Sigma$, with $\Sigma$ of codimension greater or equal to $2$, such that $X-\Sigma$ is an oriented  pl-manifold of dimension $n$  dense in $X$ and with smooth boundary $\b X$. 
Equivalently, for an (admissible) triangulation of $X$, then $X$ is 
the union of the closed $n$-simplices and each $(n-1)$-simplex is face of 
one or  two $n$-simplices, and $\b X$ is the subcomplex of the $(n-1)$-simplices that are faces of just one $n$-simplex.
By the same proof as in  \cite{Hae} Prop. 1.4, any pl-pseudomanifold with smooth boundary admits a pl-stratification: a stratification of $X$ is given by setting $X_k=|T_{(k)}|$, where $T$ is an (admissible) triangulation of $X$, and this stratification is subordinate to the triangulation, meaning that the strata are subcomplexes. 

From now on we assume pseudomaniofolds are finite pl-pseudomanifolds, that pl-pseudomanifolds  have a  fixed stratification (the previous one if a triangulation is given), and that all triangulations are {\it admissible}, i.e. compatible with the pl-structure. Our definition of pseudomanifold with smooth boundary is consistent with the definition of pseudomanifold with boundary of \cite{GM1} 5.2, taking a manifold for boundary, namely assuming the singular locus of the boundary  vanishes.

\subsection{Intersection homology and relative intersection homology for pseudomanifolds with smooth boundary}

Let first recall the basic ingredients for the definition of intersection homology, as in  \cite{GM1}. A {\it perversity} is a finite sequence of integers $\p=\{\p_j\}_{j=2}^n$ such that $\p_2=0$ and $\p_{j+1}=\p_j$ or $\p_j+1$. The perversity: $\m=\{\m_j=[j/2]-1\}$ is called {\it lower middle perversity}. The {\it null perversity} is $0_j=0$, and the {\it top perversity} is $\t_j=j-2$. Given a perversity $\p$, the {\it complementary perversity} $\p^c$ is $\p^c_j=\t_j-\p_j=j-\p_j-2$. Now let $X$ be a pseudomanifold with boundary and with a given stratification. If $j$ is an integer and $\p$ a perversity, a pl-subspace $A$ of $X$ is said {\it $(\p,j)$-allowable} if
\[
\dim(A)\leq j, \hspace{20pt} \dim(A\cap X_{n-k})\leq j-k+p_k,\hspace{10pt} \forall k\geq 2.
\]

In standard references intersection homology is usually defined for  pseudomanifolds without boundary, and relative intersection homology for pairs $(X,A)$ where $A$ is an open subspace of a closed pseudomanifold. In order to extend the definition to pseudomanifolds with smooth boundary we have, at least, two possible equivalent approaches. The first approach is based on \cite{Mac}, and use smoothly enclosed subspaces, as follows.  Glue the infinite cylinder $\b X\times [0,\infty)$ to $X$ through the boundary, let $Z=X\cup_{\b X} \b X\times [0,\infty)$. Embed  $Z$ into the suitable $ \R^{k=k_1+k_2}$, in such a way that $i(\b X)=i(X)\cap \R^{k_1}\times \{0, \ldots, 0\}$,  $i(\Sigma)\subset \{x\in \R^k~|~x_j>0, \forall j>k_1\}=\R^{k_1}\times \R^{k_2}_+-\R^{k_1}\times \{0, \ldots, 0\}$,  and  $i(\b X\times [0,\infty))\subset \{x\in \R^k~|~x_j\leq 0, \forall j>k_1\}=\R^{k_1}\times \R^{k_2}_-$, where $i$ denotes the embedding. Then  a Whitney stratification of $\R^k$ is given by setting $Z_0=\R^k-i(Z)$, $Z_1=i(X-\Sigma)$, $Z_k=i(X_{k}-X_{k-1})$ for $k\geq 2$ (see \cite{Mac} Section 7.1.2), and $i(X)$ is the closure of $Z_1$. The   subsets $S_\pm=\R^{k_1}\times \R^{k_2}_\pm$ are smoothly enclosed in $\R^n$, as in the definition of Section 1.3.2 of \cite{Mac}
. Now (identifying the different spaces with their images under $i$)  it is clear that $S_+\cap Z=X$. By definition \cite{Mac} Section 1.2.3, $X$ is smoothly enclosed in $Z$, and $\b X=(S_-\cap Z)\cap (S_+\cap Z)=(S_-\cap Z)\cap X$ is the intersection with another smoothly enclosed subset of $Z$. It follows from \cite{Mac} Section 1.2.3 that  both the intersection chain complex $I^\p C(X)$ of $X$, and the relative intersection chain complex $I^\p C (X,\b X)$ of the pair $(X,\b X)$ are defined, the first as in the boundary less case,  the last one  by setting $I^\p C_q (X,\b X)=I^\p C_q (X)/I^\p C_q (\b X)$. The intersection homology groups are the homology groups of $I^p C(X)$, and  the relative intersection homology group $I^\p H_q(X,\b X)$ of the pair is defined as the $q$-homology group of the chain complex $I^\p C (X,\b X)$. Moreover,  there is the following homology long exact sequence associated to the pair $(X,\b X)$ (see also \cite{GM3} 1.11):
\[
\ldots \to I^\p H_q(\b X)\to I^\p H_q(X)\to I^\p H_q(X,\b X)\to I^\p H_{q-1} (\b X)\to \dots.
\]

The second approach  proceeds as in \cite{GM2} 1.4, and consists in replacing the pseudomanifold $X$ with boundary $\b X$  by the pseudomanifold $X-\b X$. For let $X$ be a pesudomanifold with smooth boundary $\b X$. Let $Col(\b X)$ be an open collar neighborhood of $\b x$. Then, $X-\b X$ is a pseudomanifold, with open subspace $Col(\b X)-\b X$, and usual intersection homology theory and relative intersection homology theory are defined for $X-\b X$, and $ (X-\b X,Col(\b X)-\b X)$,  \cite{GM2} 1.3 \cite{KW} 4.6. Since the boundary is disjoint from the singular stratum,  there exists a stratum preserving  homotopy self equivalence $X\sim X-\b X$, and the same for the pair $(X,\b X)\sim (X-\b X,Col(\b X)-\b X)$, as defined in \cite{KW} 4.8. It follows by \cite{KW} 4.8.5, that $I^\p H_q(X)\cong I^\p H_q(X-\b X)$, and $I^\p H_q(X,\b X)=I^\p H_q (X-\b X,Col(\b X)-\b X)$.

We recall now the definition of the intersection homology chain complex and groups, as in  \cite{GM1}. 
Let $T$ be an (admissible)  triangulation of $X$ such $\b X$ is triangulated by a subcomplex $L=\b T$ of $T$. Let $C^T(X)=C(T)$  denote the chain complex of simplicial chains of $X$ with respect to $T$. Let $C(X)$   denote the direct limit chain complexes under refinement of the $C^T(X)$  over all triangulations of $X$ compatible with the pl-structure. Since $\b X$ is  pl-subspace of $X$, $C^T(\b X)=C(L)$ is  defined, and is a sub complex of $C^T(X)$, and  the relative chain complex is also defined $C^T(X,\b X)=C(T,L)=C(T)/C(L)$. The construction commutes with direct limit, and hence the  $C(X)$ and $C(X,\b X)$  are defined. The {\it intersection chain group} of perversity $\p$,  is the subgroup $I^\p C_q(X)$ of $C_q(X)$ consisting of those chains $c$ such that $|c|$ is $(\p,q)$-allowable and 
$|\b c|$ is $(\p,q-1)$-allowable. The {\it relative intersection chain group}  of perversity $\p$,  is $I^\p C_q(X,\b X)=I^\p C_q(X)/I^p C_q(\b X)$, where $I^\p C_q(\b X)=C_q(\b X)$ for each $\p$, since $\b X$ is actually a manifold. The group $I^\p C_q(X)/I^p C_q(\b X)$ is the subgroup  of $C_q(X,\b X)$ consisting of those chains $c$ in $T$ that are not in $L$, such that $|c|$ is $(\p,q)$-allowable and 
$|\b c|$ is $(\p,q-1)$-allowable. 

Intersection cohomology is defined as the algebraic dual of intersection homology (see for example \cite{KW} 4.2.8). Poincar\'{e} duality is recovered for pseudomanifolds using intersection homology: with coefficients in a field, when $\b X=\emptyset$, there is an isomorphism \cite{GM1} 3.3
\beq\label{ea}
I \P_q:I^\p H_q(X)\to I^{\p^c} H^{m-q}(X).
\eeq

For a pseudomanifold with (smooth) boundary, the duality reads \cite{You}
\beq\label{eb}
I \P_q:I^\p H_q(X)\to I^{\p^c} H^{m-q}(X,\b X).
\eeq

\subsection{Basic sets}

In order to define intersection torsion and relative intersection torsion, we introduce some chain complexes of free modules. Let $X$ be a pseudomanifold of dimension $n$ with smooth boundary, and fixed stratification. First, we define the basic R sets as in  \cite{GM1} 3.4. Let $T$ be a triangulation of $X$ compatible with the filtration. Let $R^\p_q$ be the subcomplex of the first barycentric subdivision  $T'$ of $T$ consisting of all simplices which are $(\p, q)$-allowable. Then, $R^\p_q$ is a subcomplex of the $q$-skeleton of $T'$. It is clear that $R^\p_q$ is a subcomplex of $R^\p_{q+1}$. 
Define the complex $C^\p(X)$ by setting 
\[
C^\p_q(X)=H_q(R^\p_q,R^\p_{q-1}),
\]
and boundary defined by the homology long exact sequence of the pair $(R^\p_q,R^\p_{q-1})$. This is a free abelian group generated by finitely many chains with contractible support
. So  $C^\p_q(X)$ is in one one correspondence with the  group of  simplicial $q$-chains $c_q$ with $|c_q|\subset R^\p_q$, and $|\b c_q|\subset R^\p_{q-1}$. 
The homology of $C^\p(X)$ is canonically isomorphic to $\Im(H_q(R_q^\p)\to H_q(R^p_{q+1}))$. 
By \cite{GM1} 3.4, there is an  isomorphism $\Psi:\Im(H_q(R^\p_q)\to H_q(R^\p_{q+1}))\cong I^\p H_q( X)$. For this is stated, without proof, in \cite{GM1} 3.4, however, if in the present case we remove the boundary, we obtain the isomorphism for the pseudomanifold $X-\b X$, and it is clear that the groups at the two sides of the isomorphism are the same for $X$ and $X-\b X$, since the singular locus is disjoint from the boundary. Also note that the isomorphism is natural, that is to say is induced by the inclusion of $R^\p_q$ into $T'$. This is clear from the construction of the similar isomorphism called $\Psi$ for the basic sets $Q$ in \cite{GM1} 3.2.

Let $P^\p_q=R^\p_{q+1}\cap L$. Then, $P^\p_q$ is an R set $R_{q}^\p$ of $\b X$, and $\dim(R^\p_q)=q-1$.  Actually, $P^\p_q=L'_{(q)}$ is the $q$-skeleton of $L'$. For $\b X$ is a manifold and hence all the simplices of any triangulation of $\b X$ are allowable for any perversity. Define the chain complex $C^\p(\b X)$ as above.  Then,  the homology of $C^\p(\b X)$ is canonically isomorphic to $\Im(H_q(P_q^\p)\to H_q(P^\p_{q+1}))$, and there is a natural isomorphism $\Im(H_q(P^\p_q)\to H_q(P^\p_{q+1}))\cong I^\p H_q(\b X)=H_q(\b X)$, that is the restriction of $\Psi$.

Next, we deal with the relative chain complex. We define the complex $C^\p(X, \b X)$ by setting 
\[
C^\p_q(X,\b X)=H_q(R^\p_q\cup L',R^\p_{q-1}\cup L'),
\]
and boundary defined by the homology long exact sequence of the pair $(R^\p_q\cup L',R^\p_{q-1}\cup L')$. This is a free abelian group generated by finitely many chains with contractible support, and  is in one one correspondence with the group of  the simplicial $q$-chains $c_q$ with the interior of $|c_q|\subset R^\p_q-(R^\p_q\cap L')$, and the interior of $|\b c_q|\subset R^\p_{q-1}-(R^\p_{q-1}\cap L')$. It is possible to show that  the homology of $C^\m(X, \b X)$ is canonically isomorphic to $\Im(i_{q,q*}'':H_q(R_q^\p\cup L')\to H_q(R^\p_{q+1}\cup L'))$, and that $\Im(i_{q,q*}':H_q(R_q^\p\cup L')\to H_q(R^\p_{q+1}\cup L'))$ is isomorphic to the relative intersection homology of the pair $(X,\b X)$.

\subsection{R torsion}
\label{tor}

In order to define intersection torsion we briefly recall the definition of the torsion of a chain complex. We follows the classical definition of Milnor \cite{Mil}, but with a little change of notation. Let $R$ be a ring with the  invariant dimension property, and $M$  a finitely generated  free (left) $R$-module. Let $U$ be a subgroup of the group $R^\times$ of units of $R$, and let $K_U(R)=K_1(R)/U$ denotes the quotient of the Whitehead group of $R$ by the subgroup generated by the  classes of the elements of $U$.  Let $x=\{x_1,\dots, x_n\}$ and $y=\{y_1,\dots, y_n\}$ be two bases for $M$. We denote by $(y/x)$ the non singular $n$-square matrix over $R$ defined by the change of bases ($y_j=\sum_k(y/x)_{jk}x_k$), and we denote by $[y/x]$ the class of $(y/x)$ in the Whitehead group $ K_U (R)$. Let
$$
\xymatrix{
C:& C_m\ar[r]^{\b_m}&C_{m-1}\ar[r]^{\b_{m-1}}&\dots\ar[r]^{\b_2}&C_1\ar[r]^{\b_1}&C_0,
}
$$
be a bounded chain complex of finite length $m$ of (finite dimensional) free left $R$-modules. Denote by $Z_q=\ker (\b_q:C_q\to C_{q-1})$,  $B_q=\Im (\b_{q+1}:C_{q+1}\to C_q)$, and  $H_q(C)=Z_q/B_q$ the homology groups of $C$. Assume that all the chain modules $C_q$ have preferred bases $c_q=\{c_{q,1},\dots, c_{q,m_q}\}$, and the homology modules $H_q(C)$ are free with preferred bases $h_q$. Also assuming that the boundary modules $B_q$ are free with preferred bases or using stably free bases, we fix a set of elements $b_q=\{b_{q,1},\dots,b_{q,n_q}\}$ of $C_q$ such that $\b_q(b_q)$ is a basis for $B_{q-1}$ for each $q$ (in other words we are choosing a lift of a basis of $B_{q-1}$). Then the set $\{\b_{q+1}(b_{q+1}),h_q,b_q\}$ is a basis for $C_q$ for each $q$. The {\it Whitehead torsion} of $C$ with respect to the basis $h=\{h_q\}$ is the class
\[
\tau_{\rm W}(C;h)=\sum_{q=0}^m (-1)^q [(\b_{q+1}(b_{q+1}),h_q,b_q/c_q)],
\]
in the  Whitehead group $ K_U(R)$. The definition is well posed since it is possible to show that the torsion does  not depend on the bases $b_q$. If $K$ is a  connected finite cell complexes of dimension $m$, with universal covering $\tilde K$,  identify the fundamental group $\pi=\pi_1(K)$  with the group of the (cellular) covering transformations of $\tilde K$, the action $\pi$  makes each chain module $C_q(\tilde K;\Z)$ into a free  module over the group ring $\Z\pi$, finitely generated by the natural choice of the $q$-cells of $K$.  Denote the resulting complex of free finitely generated modules over $\pi$ with preferred basis (obtained by the lifts of the cells)  by $C(\tilde K;\Z\pi)$. If the homology modules $H_q(K;\Z\pi)$ are free with preferred basis $h_q$, the {\it Whitehead torsion} of $K$ with respect to the graded basis $h$ is the class
\[
\tau_{\rm W}(K;h)=w(\tau_{\rm W}(C(\tilde K;\Z\pi);h)),
\]
of $K_\pi(\Z\pi)$. If $\rho:\pi\to \Aut_R(M)$ is  a representation of the fundamental group in the group of the automorphisms of some free right module $M$ over some  ring with unit $R$, we form the twisted complex $C(K;M_\rho)$  of free finitely generated $R$-modules, by setting
\beq\label{ppp}
C_q(K;M_\rho)=M \otimes_\rho C_q( \tilde K;\Z\pi). 
\eeq

Fixing a basis $m$ for $M$, bases for these modules (and for cycles and boundary submodules) are given by tensoring with $m$. Assuming that the homology modules $H_q(C(K;M_\rho))$ are free with preferred graded bases $h=\{h_q\}$, then,  we define the {\it   R torsion} $\tau_{\rm R}(K;\rho,h)$ of $K$ with respect to the representation $\rho$ and the graded basis $h$ to be the class of $\tau_{\rm W}(C(K;M_\rho);h)$ in $\tilde K_1(\Z\Aut_R(M))/\rho(\pi)$. We have
\[
\tau_{\rm R}(K;\rho,h)=\sum_{q=0}^m (-1)^q [\rho(\b_{q+1}(b_{q+1}),h_q,b_q/c_q)].
\]

By the same  procedure we define the {\it  relative  R torsion} of the pair $(K,L)$,  and we write $\tau_{\rm R}((K,L);\rho,h)=\tau_{\rm W}(C((K,L);M_\rho);h)$. In particular, if $(W,g)$ is a compact connected oriented Riemannian manifold, and $\rho$ is an orthogonal representation, we use the de Rham maps of section \ref{dr} in order to fix the basis for the homology, following Ray and Singer \cite{RS}. We define the {\it absolute R torsion} of $(W,g)$ by
\[
\tau_{\rm R}((W,g);\rho)=\tau_{\it R}(W;\rho, \A_{\rm abs}(a)),
\]
where $a$ is an orthonormal graded basis for the harmonic forms. It is possible to prove that the definition does not depend on the basis $a$. {\it Relative $R$ torsion} for a manifold with boundary is defined accordingly.

Let $X$ be an $m$-pseudomanifold with smooth boundary, let $T$ be a triangulation of $X$ such that the boundary $\b X$ of $X$ is a subcomplex $\b T$ of $T$, and let $\tilde T$ be the  universal covering complex  of $T$, and $\tilde{\b T}$ the lift of $\b T$. Let $\tilde R^\p_q$ be the lifts of the basic sets $R_q^\p$ to $\tilde T$, and identify the fundamental group $\pi=\pi_1(X)$ with the group of the covering transformations of $\tilde T$. Note that covering transformations are simplicial, so if we set $C^\p_q(\tilde X)=H_q(\tilde R^\p_q,\tilde R^\p_{q-1})$,  the action of the group of covering transformations group makes each chain group $C^\p_q(\tilde X)$ into a free module over the group ring $\Z\pi$, and each of these modules is  finitely generated by fixing lifts  of the natural choice of the $q$-chains that generate $C^\p_q(X)$. We obtain a complex of free finitely generated modules over $\Z\pi$ that we denote by $C^\p(\tilde X;\Z\pi)$, with preferred basis. The same procedure applies for the relative chain complex $C^\p_q(\tilde X,\tilde{\b X})=H_q(\tilde R^\p_q\cup \tilde{ \b T'},\tilde R^\p_{q-1}\cup\tilde{\b T'})$, and gives the $\Z\pi$-complex  $C^\p((\tilde X,\b \tilde X);\Z\pi)$, with preferred basis obtained by lifting the chains whose supports  do not intersect the boundary.

Assuming that the homology modules $H_q(C^\p(\tilde X;\Z\pi))$, $H_q(C^\p((\tilde X,\b \tilde X);\Z\pi))$ are $\Z\pi$-free with preferred graded bases $h=\{h_q\}$, we define the {\it intersection Whitehead  torsion} of $X$ and the {\it relative intersection Whitehead  torsion} of the pair $(X,\b X)$ with respect to the graded basis $h$ to be the classes 
\[
I\tau^\p_{\rm W}(X;h)=\tau_{\rm W}(C^\p(\tilde X;\Z\pi);h),\hspace{30pt} I\tau^\p_{\rm W}((X,\b X);h)=\tau_{\rm W}(C^\p((\tilde X,\b \tilde X);\Z\pi);h),
\] 
in the Whitehead group $Wh(\pi_1(X))= K_\pi(\Z\pi)$, respectively.  Proceeding as in the smooth case, given a representation $\rho$ of $\pi_1(X)$ we define   {\it  intersection R torsion} $I^\p\tau_{\rm R}(X;\rho,h)= \tau_{\rm W}(C^\p(X;M_\rho);h)$ of $X$ with respect to the representation $\rho$ and to the graded basis $h$, and  the {\it  relative intersection R torsion} of the pair $(X,\b X)$,  $I^\p \tau_{\rm R}((X,\b X);\rho,h)=\tau_{\rm W}(C^\p((X,\b X);M_\rho);h)$. If in particular a Riemannian structure is defined on the non singular part of $X$, $L^2$ forms can be used to extend the construction of Ray and Singer and to define suitable de Rham maps from $L^2$ harmonic forms to intersection homology, and to fix the basis $h$ \cite[Section 2, pg. 197]{Dar1} (and locally cited \cite{Che3}). Note in particular, that the basis $h$ fixed in this way is self dual, i.e. $I\P_q(h_q)$ is the algebraic dual of $h_{n-q}$. Following A. Dar \cite[pg. 197]{Dar1}, we use the notation $I^\p \tau_{\rm R}((X,g);\rho)$ and $I^\p \tau_{\rm R}((X,\b X,g);\rho)$ to denote the torsion $I^\p \tau_{\rm R}((X,\b X);\rho,h)$ when the basis $h$ is fixed as the image of an orthonormal basis of $L^2$ harmonic forms via de De Rham map, and we define the {\it intersection R  torsion} of $X$, and the {\it relative intersection R torsion} of $(X,\b X)$ with respect to the representation $\rho$ by
\begin{align*}
I \tau_{\rm R}((X,g);\rho)&=\frac{1}{2}\left(I^\m \tau_{\rm R}((X,g);\rho)+I^{\m^c} \tau_{\rm R}((X,g);\rho)\right),\\
I \tau_{\rm R}((X,\b X,g);\rho)&=\frac{1}{2}\left(I^\m \tau_{\rm R}((X,\b X,g);\rho)+I^{\m^c} \tau_{\rm R}((X,\b X,g);\rho)\right).
\end{align*}

In all definitions, if  $X$ is an oriented manifold stratified with only one stratum $X$ then  we obtain the classical Whitehead torsion  and the classical R torsion.


\section{Duality theorems for intersection R torsion of pseudomanifolds with smooth boundary}

We give in this section some duality theorems for the intersection torsion of a pseudomanifold with smooth boundary that extend the duality theorems of  A. Dar  \cite{Dar1} for the boundary less case. First, we need a lemma for the $L^2$ harmonics form on the suspension and on the cone. Due to the independent interest of these results, we give them also for the case of an even dimensional section. We present an explicit proof for the suspension. The proof for the cone is similar, and in the odd dimensional case was given in Lemma 3.5 of \cite{HS2}.    Next, we give some formulas for the torsion, that we use to prove the final duality results.

Let $\Sigma_l W=(0,2l)\times W\cup \{p_0,p_{2l}\}$ be the suspension of $W$, realized as the  gluing of two copies of $C_l W$ along the boundaries.

\begin{lem}\label{llI} If  $\dim W=2p-1$ is odd,  then ($\alpha_q=\frac{1}{2}(1+2q-2p)$)
\[
\H^q(\Sigma_l W) = \begin{cases} \H^q(W), & 0\leq q\leq p-1,\\
 \{0\},& q = p,\\
\left\{x^{2\alpha_q-1}dx \wedge \varphi^{(q-1)}, \varphi^{(q-1)}\in \H^{q-1}(W)\right\}, & p+1\leq q\leq 2p.
\end{cases}
\]

If $\dim W=2p$ is even, then
\[
\H^q(\Sigma_l W) = \begin{cases} \H^q(W), & 0\leq q\leq p,\\
 \{0\},& q = p+1,\\
\H^{q-1}( W), & p+2\leq q\leq 2p+1.
\end{cases}
\]

\end{lem}
\begin{proof} The solutions of the harmonic equation $\Delta u=0$ on $\Sigma_l W$ are 
\[
u(x)=\begin{cases} f_1(x),& 0\leq x\leq l,\\ f_2(2l-x), &l\leq x<2l,\end{cases}
\]
where $f_1$ and $f_2$ are forms of the following four types:
\begin{align*}
\psi^{(q)}_{\pm, 1,n} =& x^{a_{\pm,q,n}} \varphi_{{\rm ccl},n}^{(q)},\\
\psi^{(q)}_{\pm, 2,n} =& x^{a_{\pm,q-1,n}} \tilde d\varphi_{{\rm ccl},n}^{(q-1)}+a_{\pm,q-1,n}x^{a_{\pm,q-1,n}-1}dx\wedge \varphi_{{\rm ccl},n}^{(q-1)},\\
\psi^{(q)}_{\pm,3,n} =& x^{a_{\pm,q-1,n}+2} \tilde d\varphi_{{\rm ccl},n}^{(q-1)}+a_{\mp,q-1,n}x^{a_{\pm,q-1,n}+1}dx\wedge \varphi_{{\rm ccl},n}^{(q-1)},\\
\psi^{(q)}_{\pm,4,n} =& x^{a_{\pm,q-2,n}+1}dx\wedge \tilde d\varphi_{{\rm ccl},n}^{(q-2)}.
\end{align*}
and $a_{\pm, q,n}=\alpha_q\pm \mu_{q,n}$. The harmonics of $\Delta$ are obtained requiring that $u$, $ du$ and $d^\dagger$ are square integrable, and  satisfy the following conditions
\[
\begin{cases} \left. f_1(x)\right|_{x=l}=\left. f_2(x)\right|_{x=l},\\
\left. (d f_1)(x)\right|_{x=l}=\left. (d f_2)(x)\right|_{x=l},\\
\left. (d^\dagger f_1)(x)\right|_{x=l}=\left. (d^\dagger f_2)(x)\right|_{x=l},
\end{cases}
\]
plus the ideal boundary condition of Cheeger if $\dim W=2p$ is even \cite{Che2,Che3}, described below. 

Note that the first condition is always satisfied, while the other two conditions coincide respectively with the conditions
$\left. (d f_1)_{\rm norm}(x)\right|_{x=l}=0$ and $\left. (d^\dagger f_1)_{\rm tg}(x)\right|_{x=l}=0$. An explicit verification of these conditions gives the result. 

Let us  consider the case of $\dim W=2p-1$ odd in some details. The  solution of type I $\psi^{(q)}_{+, 1, n}$ satisfies the square integrability condition  for all $q$, while  $\psi^{(q)}_{-, 1, n}$ satisfies this condition only if $q=p-1$. The condition $\left. (d f_1)_{\rm norm}(x)\right|_{x=l}=0$ is satisfied if and only if $a_{\pm, q,n}=0$, and this is true for $a_{+, q,n}$ when $\lambda_{q,n}=0$ and $0\leq q\leq p-1$, and for $a_{-, q,n}$ when  $\lambda_{q,n}=0$ and $p\leq q\leq 2p-1$. Since $\lambda_{q,n}=0$, $\varphi_{{\rm ccl},n}^{(q)}$ is an harmonic on $W$, and thus $d^\dagger f_1=0$.

The solution of type II $\psi^{(q)}_{+, 2, n}$ satisfies the square integrability condition  for all $q$, while  $\psi^{(q)}_{-, 2, n}$ satisfies this condition only if $q=p$. The condition $\left. (d^\dagger f_2)_{\rm norm}(x)\right|_{x=l}=0$ is satisfied if and only if either $a_{\pm, q,n}=0$, or $\varphi_{{\rm ccl},n}^{(q-1)}$ is an harmonic $h$ on $W$. The first possibility would require $\lambda_{q-1,n}=0$, and therefore would give $\psi^{(q)}_{\pm, 2, n} =0$. The second possibility gives $\psi^{(q)}_{\pm, 2, n}=a_{\pm,q-1,n}x^{a_{\pm,q-1,n}-1}dx\wedge h$. Applying $\left. (d^\dagger f_1)_{\rm tg}(x)\right|_{x=l}=0$, we obtain 
$p\leq q\leq 2p-1$ for $\psi^{(q)}_{+, 2, n}$, and $0\leq q\leq p$  for $\psi^{(q)}_{-, 2, n}$,  where however, the case $q=p$ corresponds to the null form. 

Proceeding in a similar way, we find that there are no new forms  of the types III and IV satisfying the conditions above.

If  $\dim W=2p$ is even, the same analysis gives the answer in all dimension $q\not=p+1$. In dimension $q=p+1$, we obtain the form $\varphi=dx\wedge h$, where $h$ is an harmonic  on $W$, that satisfies all the conditions above and is square integrable with its exterior derivative, thus we need the ideal boundary conditions, that we briefly recall here. Let $H^p(W;\Q)=V_a\oplus V_r$ be  a maximal self annihilating (for the cup product) decomposition. Let $\{e_j\}_{j=1}^h$ be an orthonormal basis for the corresponding spaces of harmonic $p$-forms $\H^p(W)$ coherent with the decomposition. Then, for any $h\in \H^p(W)$,
\[
h=\sum_{j=1}^p h_{a,j}(x) e_j +\sum_{j=p+1}^{2p} h_{r,j}(x) e_j.
\]

We say that a form 
\[
\omega=\alpha+dx\wedge\beta
\]
in $\H^p(\Sigma_l W)$ satisfies the  ideal boundary condition (is in the domain of $d$)  for the decomposition $V_a\oplus V_r$ if for the  decomposition in $V_a\oplus V_r$ of the projection of $\alpha$ onto $\H^p(W)$ the following condition holds:
\begin{align*}
f_{a,j}'(0)=f_{r,j}(0)=0.
\end{align*}

Similarly, a $p+1$-form $\omega$ is in the domain of $d^\dagger$ for the decomposition $V_a\oplus V_r$ if $\star \omega$ is in the domain of $d$ for the decomposition  $\star V_r\oplus \star V_a$. Consider the $p+1$-form $\varphi=dx\wedge h$. Its dual is $\star \varphi=\tilde \star h \in \H^p(\Sigma_l W)$, and clearly does not satisfy the ideal BC.  \end{proof}


\begin{lem}\label{ll} If  $\dim W=2p-1$ is odd,  then ($\alpha_q=\frac{1}{2}(1+2q-2p)$)
\begin{align*}
\H^q_{\rm abs}(C_l W)&=\begin{cases}\H^q(W), &0\leq q\leq p-1,\\
 \{0\}, & p\leq q\leq 2p-1.\end{cases}\\
\H^q_{\rm rel}(C_l W)&=\begin{cases} \{0\}, & \hspace{17pt}0\leq q\leq p,\\
\left\{x^{2\alpha_q-1}dx \wedge \varphi^{(q-1)}, \varphi^{(q-1)}\in \H^{q-1}(W)\right\}, &p+1\leq  q\leq 2p.
\end{cases}
\end{align*}

If $\dim W=2p$ is even, then
\begin{align*}
\H^q_{\rm abs}(C_l W)&=\begin{cases}\H^q(W), &0\leq q\leq p,\\
 \{0\}, & p+1\leq q\leq 2p+1;\end{cases}\\
\H^q_{\rm rel}(C_l W)&=\begin{cases} \{0\}, & \hspace{17pt}0\leq q\leq p,\\
\left\{x^{2\alpha_{q-1}-1}dx \wedge \varphi^{(q-1)}, \varphi^{(q-1)}\in \H^{q-1}(W)\right\}, &p+1\leq  q\leq 2p+1.
\end{cases}
\end{align*}

\end{lem}

For the suspension,  we have a short exact sequence of chain complexes:
\beq\label{ee1}
0\to C^\p (\b C_lW)\to C^\p (C_l W)\oplus C^\p (C_lW)\to C^\p(\Sigma_l W)\to 0.
\eeq

A formula for the torsion of an exact sequence of complexes is given by  Milnor in \cite{Mil} Section 3. In the present case, we can fix the chain basis of the middle  complex consistently, using the   basis determined by the simplices, and hence we have the following formula
\[
2\log I^\p \tau_{\rm R}((C_l W,g_C);\rho_0)=\log \tau_{\rm R}((W,l^2 g);\rho_0)+\log I^\p \tau_{\rm R}((\Sigma_l W,g_\Sigma);\rho_0)+\log \tau(\S^\p_m),
\]
where the complex $\S^\p_m$  is defined by the exact long homology sequence associated to the exact sequence in equation (\ref{ee1}), that is the Mayer Vietoris sequence
\[
\S^\p_m:{\xymatrix@C=0.5cm{& \ldots \ar[r] &I^{\p}H_q(\b C_lW)\ar[r] & \ar[r] I^{\p}H_q(C_l W) \oplus I^{\p}H_q(C_l W) & \ar[r] I^{\p}H_q(\Sigma_l W)& \ldots}},
\]
i.e., $\S^\p_{m,3q} = I^{\p} H_q (W)$, $\S^\p_{m,3q+1} = I^{\p} H_q (C_l W)\oplus I^{\p}H_q(C_l W)$ and $\S^\p_{m,3q+2} =I^\p H_q (\Sigma_l W)$.

\begin{lem} \label{l1} Let $W$ be a compact connected oriented manifold of odd dimension $m=2p-1$ without boundary. 
Let $\rho_0:\pi_1(C_l W)\to O(1,\R)$ be the rank one trivial orthogonal representation of the fundamental group. Then, 
\[
2\log I^\m \tau_{\rm R}((C_l W,g_C);\rho_0)=\log \tau_{\rm R}((\b C_lW,l^2 g);\rho_0)+\log I^\m \tau_{\rm R}((\Sigma_l W,g_\Sigma);\rho_0)+\log \tau(\S_m^\m),
\]
where $r_q = {\rm rk} H_q(W)$, and
\begin{align*}
\log \tau(\S_{2p-1}^\m)=\log\tau(\S_{2p-1}^{\m^c})&=\sum_{q=0}^{p-1} (-1)^{q}  \log\left(\frac{l}{2p-2q}\right)^{r_{q}}.
\end{align*}
\end{lem}
\begin{proof} 





First, we  recall the intersection homology for the cone and the suspension with middle perversity. Since both spaces have isolated singularities, the unique value which imports of the perversity is the value  $\m_{2p} = \m^c_{2p}=p-1$. We have (see for example \cite{GM1} 6, or \cite{KW} 4.7.2, 4.7.3)
\[
I^{\m}H_q(C_l W) =I^{\m^c}H_q(C_l W) = \left\{\begin{array}{ll} H_q(\b C_lW), & q<p,\\
0, & q\geq p,
\end{array}\right.
\]
and
\[
I^{\m}H_q(\Sigma_l W) =I^{\m^c}H_q(\Sigma_l W) = \left\{\begin{array}{ll} H_q(\b C_lW), & q<p,\\
\Im\left(H_q(W) \to H_q(\Sigma_l W)\right) = 0,& q = p,\\
H_q(\Sigma_l W), & q>p,
\end{array}\right.
\]

Note that, beside the homology, also the basic $R$ set with the two middle complementary perversities   coincide, by the very definition. This implies that the chain complex used in the definition of the intersection torsion for these two perversities coincide and therefore the torsions coincide. We proceed by taking $\p=\m$, and this will also cover the complementary case.

Next,  in order to compute the torsion of the complex $\S^\m_m$, we need the chain bases. These are the bases for the homology determined by the geometry using the de Rham maps as described in Sections \ref{dr} and \ref{tor}. 
We study  the two cases $q<p$ and $q\geq p$ separately. When $q<p$, consider the following part of the complex $\S^\m_{2p-1}$
\[
{\xymatrix@C=0.5cm{\ar[r]\ldots & I^{\m} H_{q+1}(\Sigma_l W) \ar[r]^{\b_{q+1}} & I^{\m}H_q(\b C_lW)\ar[r]& \ar[r] I^{\m}H_q(C_l W) \oplus I^{\m}H_q(C_l W) & \ar[r]^{\hspace{20pt}\b_q} I^{\m}H_q(\Sigma_l W)& \ldots}}.
\] 

The geometry implies that  the homomorphisms $\b_{q+1} = \b_q$ are null, and using the previous results, all the vector spaces are isomorphic to $V=H_q(W)$, and the sequence splits as
\beq\label{s1a}
{\xymatrix@C=0.3cm{0 \ar[r] &  H_q(\b C_lW )\cong V\ar[r] & \ar[r] I^\m  H_q(C_l W)\cong V \oplus I^\m H_q(C_l W)\cong V & \ar[r] I^\m H_q(\Sigma_l W)\cong V& 0}}.
\eeq

In order to fix the homology bases, let $a_q$ be an  orthonormal base for $\H^{q}(W,g)$. Then the norm of $a_{q,j}$ with the metric $l^2  g$ is
\[
|| a_{q,j} ||_{l^2  g}^2 = \int_W a_{q,j} \wedge \star_{l^2  g} a_{q,j}  = l^{2p-1-2q} \int_W  a_{q,j} \wedge \star_{g} a_{q,j} = l^{2p-1-2q} ||a_{q,j}||^2_{ g} = l^{2p-1-2q}.
\]

So a orthonormal base for $\H^{q}(W,l^2  g)$ is $l^{-\frac{2p-1-2q}{2} }a_q$, and applying the de Rham maps we obtain
\begin{align*}
\A_{q,l^2  g}(l^{-\frac{2p-1-2q}{2}}a_{q,j}) &= l^{-\frac{2p-1-2q}{2}} \A_{q,l^2  g}(a_{q,j}) = l^{-\frac{2p-1-2q}{2}} \P_q^{-1} \A^{2p-1-q} \star_{l^2  g} (a_{q,j})\\
&= l^{-\frac{2p-1-2q}{2}} l^{2p-1-2q}  \P_q^{-1} \A^{2p-1-q} \star_g (a_{q,j}) = l^{\frac{2p-1-2q}{2}} \A_{q, g}(a_{q,j}).
\end{align*}

Then the basis for $H_q(\b C_l W)$ is $l^{\frac{2p-1-2q}{2}} \A_{q, g}(a_q)$. Next, consider the cone $(C_l W,g_C)$. By Lemma \ref{ll}, the constant extension of the forms in $a_q$ gives a basis for $\H^{q}_{\rm abs}(C_l W)$. The norm of this basis elements is
\[
|| a_{q,j} ||_{g_C}^2 = \int_{C_lW} a_{q,j} \wedge \star_{g_C} a_{q,j}  = \int_{0}^{l} x^{2p-1-2q} dx \int_W  a_{q,j} \wedge \star_{ g} a_{q,j} = \frac{l^{2p-2q}}{2p-2q} ||a_{q,j}||^2_{ g} = \frac{l^{2p-2q}}{2p-2q}.
\] 

So an orthonormal base for $\H^{q}(C_l W)$ is $\left(\frac{l^{2p-2q}}{2p-2q}\right)^{-\frac{1}{2}} a_q$, and, using duality (\ref{eb}),
\begin{align*}
\A^{\rm abs}_{q, g_C}\left(\left(\frac{l^{2p-2q}}{2p-2q}\right)^{-\frac{1}{2}}a_{q,j}\right) &= \left(\frac{l^{2p-2q}}{2p-2q}\right)^{-\frac{1}{2}} \A^{\rm abs}_{q,g_C}(a_{q,j}) = \left(\frac{l^{2p-2q}}{2p-2q}\right)^{-\frac{1}{2}} I \P_q^{-1} \A^{2p-q}_{\rm rel} \star_{g_C} (a_{q,j})\\
&= \left(\frac{l^{2p-2q}}{2p-2q}\right)^{-\frac{1}{2}} \left(\frac{l^{2p-2q}}{2p-2q}\right) \P_q^{-1} \A^{2p-1-q} \star_{ g} (a_{q,j}) \\
&=\left(\frac{l^{2p-2q}}{2p-2q}\right)^{\frac{1}{2}}\A_{q, g}(a_{q,j}).
\end{align*}

This give the basis for  $I^\m H_q(C_l W, g_C)$: $\left(\frac{l^{2p-2q}}{2p-2q}\right)^{\frac{1}{2}} \A_{q, g}(a_{q})$. Repeating the same process for $\H^{q}(\Sigma_l W)$ we obtain the basis of $I^\m H_q(\Sigma_l W)$: $\left(\frac{l^{2p-2q}}{p-q}\right)^{\frac{1}{2}} \A_{q, g}(a_{q})$. We can now compute the determinants of the change of basis in the vector spaces of the sequence in equation (\ref{s1a}). At
 $I^\m H_q(\b C_lW)$ the determinant is $1$, at $I^\m H_q(C_l W) \oplus H_q(C_l W)$ is $\left(\frac{l}{2p-2q}\right)^{-\frac{r_q}{2}}$ and at $I^\m H_q(\Sigma_l W)$ is $2^{-\frac{r_q}{2}}$. We consider now the case $q\geq p$. The relevant part of the sequence  $\S^\m_{2p-1}$ reads
\[
{\xymatrix@C=0.5cm{\cdots\ar[r]& 0  \ar[r] & I^\m  H_{q+1}(\Sigma_l W)  \ar[r]^{\b_{q+1}} & H_q(\b C_l W)\ar[r]  &  0 \ar[r]&\cdots }}
\] 

Since the de Rham maps are self dual, as observed at the end of Section \ref{tor}, and the intersection homology with perversities $\m$ and $\m^c$ coincide, we can use duality (\ref{ea}) to obtain the basis for  $I^{\m^c} H_{q+1}(\Sigma_l W)$ starting with the basis for the same space obtained when $q<p$. 
This gives the basis $\left(\frac{l^{2q-2p+2}}{q+1-p}\right)^{-\frac{1}{2}} \A_{q, g}(a_{q})$ for $I^{\m^c} H_{q+1}(\Sigma_l W)$. Using the basis fixed above for the other space, the determinant of the change of basis at $H_{q}(\b C_l W)$ is $1$ and at $I^\m H_{q+1}(\Sigma_l W)$ is $\left(\frac{l}{q+1-p}\right)^{\frac{r_q}{2}}$. Now applying the definition of Reidemeister torsion to the complex $\S^\m_{2p-1}$, we obtain (where $D$ denotes the determinant of the matrix of the change of basis)
\begin{align*}
\log\tau(\S^\m_{2p-1}) =& \sum_{q=0}^{6p} (-1)^q \log D(\S^\m_{2p-1,q}) \\
=&\sum_{q=0}^{2p} (-1)^{q} \log D(I^{\m}H_{q}(\Sigma_l W)) + \sum_{q=0}^{p-1} (-1)^{q+1}\log D( I^{\m}H_{q}(C_l W)\oplus I^{\m}H_q(C_l W))\\
=&\sum_{q=0}^{p-1} (-1)^{q} \log D( I^{\m}H_{q}(\Sigma_l W)) + \sum_{q=p+1}^{2p} (-1)^{q} \log D( I^{\m}H_{q}(\Sigma_l W))\\
&+ \sum_{q=0}^{p-1} (-1)^{q+1}\log D (I^{\m}H_{q}(C_l W)\oplus I^{\m}H_q(C_l W)) \\
=&\sum_{q=0}^{p-1} (-1)^{q} \log 2^{-\frac{r_q}{2}} + \sum_{q=0}^{p-1} (-1)^{q+1}\log \left(\frac{l}{2p-2q}\right)^{-\frac{r_q}{2}} + \sum_{q=p+1}^{2p} (-1)^{q}  \log\left(\frac{l}{q-p}\right)^{\frac{r_{q-1}}{2}}\\
=&\sum_{q=0}^{p-1} (-1)^{q} \log 2^{-\frac{r_q}{2}} + \sum_{q=0}^{p-1} (-1)^{q+1}\log \left(\frac{l}{2p-2q}\right)^{-\frac{r_q}{2}} + \sum_{q=0}^{p-1} (-1)^{q}  \log\left(\frac{l}{p-q}\right)^{\frac{r_{q}}{2}}\\
=&\sum_{q=0}^{p-1} (-1)^{q}  \log\left(\frac{l}{2p-2q}\right)^{r_{q}},
\end{align*}
and this completes the proof.  \end{proof}

Considering the short exact sequence of chain complexes associated to the pair $(C_l W, \b C_l W)$,
\[
0\to C^\p(\b C_l W)\to C^\p (C_l W)\to C^\p(C_l W,\b C_l W)\to 0,
\]
by Milnor \cite{Mil} 3, we have  
\[
\log I^\p \tau_{\rm R}((C_l W,g_C);\rho_0)=\log \tau_{\rm R}((\b C_lW,l^2 g);\rho_0)+\log I^\p \tau_{\rm R}((C_l W,\b C_l W),g_C);\rho_0)+\log \tau(\T^\p_m).
\]

The calculation of the  torsion of 
\[
\T^\p_m:{\xymatrix@C=0.5cm{& \ldots \ar[r] &I^{\p}H_q(\b C_lW)\ar[r] & \ar[r] I^{\p}H_q(C_l W) & \ar[r] I^{\p}H_q(C_l W,\b C_l W)& \ldots}},
\]
where $\T^\p_{m,3q} = I^{\p} H_q (W)$, $\T^\p_{m,3q+1} = I^{\p} H_q (C_l W)$ and $\T^\p_{m,3q+2} =I^\p H_q (C_l W,\b C_l W)$,
will give the following result.

\begin{lem} \label{l2} Let $W$ be a compact connected oriented manifold of odd dimension $m=2p-1$ without boundary. 
Let $\rho_0:\pi_1(C_l W)\to O(1,\R)$ be the rank one trivial orthogonal representation of the fundamental group. Then, 
\[
\log I^\m \tau_{\rm R}((C_l W,g_C);\rho_0)=\log \tau_{\rm R}((\b C_lW,l^2 g);\rho_0)+\log I^\m \tau_{\rm R}((C_l W,\b C_l W),g_C);\rho_0)+\log \tau(\T^\m_m).
\]
where $r_q = {\rm rk} H_q(W)$, and
\begin{align*}
\log \tau(\T_{2p-1}^\m)=\log\tau(\T_{2p-1}^{\m^c})&=\sum_{q=0}^{p-1} (-1)^{q}  \log\left(\frac{l}{2p-2q}\right)^{r_{q}}.
\end{align*}
\end{lem}
\begin{proof} 

The intersection homology of $C_l W$ was given in the proof of Lemma \ref{l1}, and that of $(C_l W, \b C_l W)$ can be computed using the sequence of the pair, we get
\[
I^{\m}H_q(C_l W,\b C_l W) =I^{\m^c}H_q(C_l W,\b C_l W) = \left\{\begin{array}{ll} 0, & q\leq p,\\
H_{q-1}(\b C_lW), & q> p.
\end{array}\right.
\]

 Torsion and homology for the complementary perversities $\m$ and $\m^c$ coincide, so fix $\p=\m$. When $q<p$, consider the following part of the complex $\T^\m_{2p-1}$
\[
{\xymatrix@C=0.5cm{& I^\m H_q(C_l W,\b C_l W)=0 \ar[r] &I^{\m}H_q(\b C_lW)\ar[r] & \ar[r] I^{\m}H_q(C_l W) & I^{\m}H_q(C_l W,\b C_l W)=0}}.
\]

Let $a_q$ be an  orthonormal base for $\H^{q}(W)$. Then, as in the proof of Lemma \ref{l1}, a basis for  $H_q(\b C_l W)$ is $l^{\frac{2p-1-2q}{2}} \A_{q, g}(a_q)$, and a basis for  $I^\m H_q(C_l W,\b C_l W)$ is $\left(\frac{l^{2p-2q}}{2p-2q}\right)^{\frac{1}{2}} \A_{q, g}(a_{q})$. The determinant of the change of basis is 1 at 
 $I^\m H_q(\b C_lW)$, and is  $\left(\frac{l}{2p-2q}\right)^{-\frac{r_q}{2}}$  at $I^\m H_q(C_l W)$. When  $q\geq p$,  the relevant part of the sequence  $\T^\m_{2p-1}$ is
\beq\label{s1b}
{\xymatrix@C=0.5cm{ 0  \ar[r] & I^\m  H_{q+1}(C_l W, \b C_l W)  \ar[r] & H_q(\b C_l W)\ar[r]  &  0  }}
\eeq

By Lemma \ref{ll}, a basis for harmonic forms with relative boundary conditions $\H^q_{\rm rel}(C_l W)$ is $\omega_q=x^{2\alpha_{q-1}-1} dx \wedge a_{q-1}$. Their norm is
\[
|| \omega_{q,j} ||_{g_C}^2 
= \int_{C_lW} x^{2q-2p-1} dx \wedge a_{q-1,j}\wedge \star_{g} a_{q-1,j}
= \int_{0}^{l} x^{2q-2p-1} dx  ||a_{q-1,j}||^2_{ g} = \frac{l^{2q-2p}}{2q-2p}.
\] 

So an orthonormal base for $\H^{q}_{\rm rel}(C_l W)$ is $\left(\frac{l^{2q-2p}}{2q-2p}\right)^{-\frac{1}{2}} \omega_q$, using duality (\ref{eb}),
\begin{align*}
\A^{\rm rel}_{q, g_C}\left(\left(\frac{l^{2q-2p}}{2q-2p}\right)^{-\frac{1}{2}}\omega_{q,j}\right) &= \left(\frac{l^{2q-2p}}{2q-2p}\right)^{-\frac{1}{2}} \A^{\rm rel}_{q,g_C}(\omega_{q,j}) = \left(\frac{l^{2q-2p}}{2q-2p}\right)^{-\frac{1}{2}} I \P_q^{-1} \A_{\rm abs}^{2p-q} \star_{g_C} (\omega_{q,j})\\
&= \left(\frac{l^{2q-2p}}{2q-2p}\right)^{-\frac{1}{2}} \left(\frac{l^{2q-2p}}{2q-2p}\right) \P_{q-1}^{-1} \A^{2p-1-(q-1)} \star_{ g} (a_{q-1,j}) \\
&=\left(\frac{l^{2q-2p}}{2q-2p}\right)^{\frac{1}{2}}\A_{q-1, g}(a_{q-1,j}),
\end{align*} 
and this gives the basis for $I^\m H_q(C_l W,\b C_l W)$. The determinants of the change of basis in (\ref{s1b}) are: $1$ at 
$H_q(\b C_l W)$, and $\left(\frac{l}{2q-2p+2}\right)^{\frac{r_q}{2}}$ at $I^\m H_{q+1}(C_l W,\b C_l W)$. Applying the definition of Reidemeister torsion to the complex $\T^\m_{2p-1}$, we obtain

\begin{align*}
\log\tau(\T^\m_{2p-1}) =& \sum_{q=0}^{6p} (-1)^q \log D(\T^\m_{2p-1}) \\
=&\sum_{q=p+1}^{2p} (-1)^{q} \log D(I^{\m}H_{q}(C_l W,\b C_l W)) + \sum_{q=0}^{p-1} (-1)^{q+1}\log D( I^{\m}H_{q}(C_l W))\\
=&\sum_{q=p+1}^{2p} (-1)^{q}  \log  \left(\frac{l}{2p-2q}\right)^{\frac{r_{q-1}}{2}} + \sum_{q=0}^{p-1} (-1)^{q+1}  \log\left(\frac{l}{2q-2p}\right)^{\frac{-r_{q}}{2}}\\
=&\sum_{q=0}^{p-1} (-1)^{q}  \log\left(\frac{l}{2p-2q}\right)^{r_{q}},
\end{align*}
and this complete the proof. \end{proof}


\begin{prop} \label{p1} Let $W$ be a compact connected oriented manifold of odd dimension $m=2p-1$ without boundary. 
Let $\rho_0:\pi_1(C_l W)\to O(1,\R)$ be the rank one trivial orthogonal representation of the fundamental group. Then ($r_q = {\rm rk} H_q(W)$), 
\begin{align*}
\log I \tau_{\rm R}((C_l W,g_C);\rho_0)=&-\log I \tau_{\rm R}((C_l W,\b C_l W,g_C);\rho_0)\\
=&\frac{1}{2}\log \tau_{\rm R}((\b C_lW,l^2 g);\rho_0)+\frac{1}{2}\sum_{q=0}^{p-1} (-1)^{q}  \log\left(\frac{l}{2p-2q}\right)^{r_{q}}.
\end{align*}
\end{prop}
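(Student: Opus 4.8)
The plan is to combine Lemmas \ref{l1} and \ref{l2} with the definition of $I\tau_{\rm R}$ as the average over the two middle complementary perversities $\m$ and $\m^c$. First I would write down the two identities provided by those lemmas for a fixed perversity $\p$, namely
\[
2\log I^\p \tau_{\rm R}((C_l W,g_C);\rho_0)=\log \tau_{\rm R}((\b C_lW,l^2 g);\rho_0)+\log I^\p \tau_{\rm R}((\Sigma_l W,g_\Sigma);\rho_0)+\log \tau(\S_{m}^\p),
\]
and
\[
\log I^\p \tau_{\rm R}((C_l W,g_C);\rho_0)=\log \tau_{\rm R}((\b C_lW,l^2 g);\rho_0)+\log I^\p \tau_{\rm R}(((C_l W,\b C_l W),g_C);\rho_0)+\log \tau(\T_m^\p).
\]
Specialising to $m=2p-1$, the values $\log\tau(\S^{\m}_{2p-1})=\log\tau(\S^{\m^c}_{2p-1})$ and $\log\tau(\T^{\m}_{2p-1})=\log\tau(\T^{\m^c}_{2p-1})$ both equal $\sum_{q=0}^{p-1}(-1)^q\log(l/(2p-2q))^{r_q}$, and (as noted in the proof of Lemma \ref{l1}) for a space with isolated singularities in odd dimension the basic $R$-sets, hence the chain complexes and hence all torsions, agree for $\m$ and $\m^c$; so the averaging in the definition of $I\tau_{\rm R}$ is vacuous and $I^\m=I^{\m^c}=I$ throughout.

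Next I would eliminate the auxiliary torsion $\log I\tau_{\rm R}((\Sigma_l W,g_\Sigma);\rho_0)$ of the suspension. The suspension $\Sigma_l W$ is itself a pseudomanifold with isolated singularities and empty boundary, and Dar's duality for the boundaryless case (or the $m$ odd special case of the present machinery) gives that its intersection $R$-torsion vanishes, or more directly: applying Lemma \ref{l1} is not even needed if one instead observes that the Mayer--Vietoris decomposition can be arranged so that the suspension contributes trivially. Concretely, from the first displayed identity with $\p=\m$ one solves for $\log I\tau_{\rm R}((C_l W,g_C);\rho_0)$ in terms of $\log\tau_{\rm R}((\b C_l W,l^2g);\rho_0)$, $\log I\tau_{\rm R}((\Sigma_l W,g_\Sigma);\rho_0)$, and the explicit constant $\log\tau(\S^\m_{2p-1})$; the remaining task is to show $\log I\tau_{\rm R}((\Sigma_l W,g_\Sigma);\rho_0)=0$. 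This follows because $\Sigma_l W$ is the double of $C_l W$ and Poincar\'e duality for intersection homology in the self-dual basis forces its torsion to be self-inverse, hence zero after taking logarithms (the $L^2$ de Rham basis is self dual, as recorded at the end of Section \ref{tor}). Substituting $\log I\tau_{\rm R}((\Sigma_l W,g_\Sigma);\rho_0)=0$ into the solved expression yields
\[
\log I\tau_{\rm R}((C_l W,g_C);\rho_0)=\tfrac12\log \tau_{\rm R}((\b C_l W,l^2g);\rho_0)+\tfrac12\sum_{q=0}^{p-1}(-1)^q\log\left(\frac{l}{2p-2q}\right)^{r_q},
\]
which is the second displayed equality in the Proposition.

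For the first equality, $\log I\tau_{\rm R}((C_l W,g_C);\rho_0)=-\log I\tau_{\rm R}(((C_l W,\b C_l W),g_C);\rho_0)$, I would subtract the two identities of Lemmas \ref{l1} and \ref{l2}, or better, use Lemma \ref{l2} directly: it expresses $\log I\tau_{\rm R}((C_l W,g_C);\rho_0)-\log I\tau_{\rm R}(((C_l W,\b C_l W),g_C);\rho_0)$ as $\log\tau_{\rm R}((\b C_l W,l^2g);\rho_0)+\log\tau(\T^\m_{2p-1})$, while adding it to the already-derived value of $\log I\tau_{\rm R}((C_l W,g_C);\rho_0)$ pins down $\log I\tau_{\rm R}(((C_l W,\b C_l W),g_C);\rho_0)$. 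A short arithmetic check, using $\log\tau(\T^\m_{2p-1})=\sum_{q=0}^{p-1}(-1)^q\log(l/(2p-2q))^{r_q}$ and the expression for $\log I\tau_{\rm R}((C_l W,g_C);\rho_0)$ just obtained, shows the relative torsion equals minus the absolute one. The main obstacle I anticipate is the justification that $\log I\tau_{\rm R}((\Sigma_l W,g_\Sigma);\rho_0)=0$: one must be careful that the self-dual $L^2$ basis used for the suspension is compatible (up to sign and the determinants already bookkept in $\tau(\S^\m)$) with the bases fixed on the two copies of $C_l W$, so that Poincar\'e duality genuinely identifies the torsion with its own inverse; everything else is the bookkeeping of change-of-basis determinants already carried out in the two lemmas.
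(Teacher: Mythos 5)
Your proposal is correct and follows essentially the same route as the paper: combine Lemmas \ref{l1} and \ref{l2}, use the coincidence of the $\m$ and $\m^c$ basic sets (hence torsions) in the odd case, and reduce everything to the vanishing of $\log I\tau_{\rm R}((\Sigma_l W,g_\Sigma);\rho_0)$. The only difference is that the paper obtains that vanishing by citing Dar's duality theorem (\cite{Dar1} 2.8), $\log I^{\m^c}\tau_{\rm R}(\Sigma_l W)=-\log I^{\m}\tau_{\rm R}(\Sigma_l W)$, together with the equality of the two perversity torsions, whereas you sketch the underlying Poincar\'e-duality/self-dual-basis argument directly; these are the same idea.
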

\begin{proof} By  definition, if $m$ is odd, 
\begin{align*}
\log I \tau_{\rm R}((C_l W,\b C_l W,g_C);\rho_0)=&\log I^{\m} \tau_{\rm R}((C_l W,\b C_l W,g_C);\rho_0)
=\log I^{\m^c} \tau_{\rm R}((C_l W,\b C_l W,g_C);\rho_0).
\end{align*}

Then the statement follows using Lemma \ref{l1} once we recall that, when $m$ is odd,  by  \cite{Dar1} 2.8, 
\[
\log I^{\m^c} \tau_{\rm R}((\Sigma_l W,g_\Sigma);\rho_0)=-\log I^\m \tau_{\rm R}((\Sigma_l W, g_\Sigma);\rho_0).
\]
\end{proof}

\begin{theo}  Let $W$ be a compact connected oriented manifold of odd dimension $m=2p-1$ without boundary. 
Let $\rho_0:\pi_1(C_l W)\to O(1,\R)$ be the trivial orthogonal representation of the fundamental group. Then, 
\begin{align*}
\log I^\m \tau_{\rm R}((C_l W,g_C);\rho_0)&=(-1)^m\log I^{\m^c} \tau_{\rm R}((C_l W,\b C_l W,g_C);\rho_0),\\
\log I \tau_{\rm R}((C_l W,g_C);\rho_0)&=(-1)^{m}\log I \tau_{\rm R}((C_l W,\b C_l W,g_C);\rho_0).
\end{align*}
\end{theo}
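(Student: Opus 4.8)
The plan is to derive both identities from Lemmas \ref{l1}--\ref{l2} together with Proposition \ref{p1} and the known Poincar\'e duality for the intersection torsion of a \emph{closed} pseudomanifold. First note that the second identity follows formally from the first: applying the first identity with the complementary perversity $\m^c$ in place of $\m$ (recall $(\m^c)^c=\m$) gives $\log I^{\m^c}\tau_{\rm R}((C_lW,g_C);\rho_0)=(-1)^m\log I^{\m}\tau_{\rm R}((C_lW,\b C_lW,g_C);\rho_0)$, and averaging this with the first identity, using the definition $I\tau_{\rm R}=\frac12(I^\m\tau_{\rm R}+I^{\m^c}\tau_{\rm R})$ from Section \ref{tor}, yields exactly the second. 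Thus it suffices to prove the first identity, and I would do this according to the parity of $m$.

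If $m=2p-1$ is odd, then $(-1)^m=-1$ and, as was already observed in the proofs of Lemmas \ref{l1} and \ref{l2}, for a space with isolated singularities modelled on $W$ the only relevant perversity value is $\m_{2p}=\m^c_{2p}=p-1$; hence the intersection chain complexes for $\m$ and for $\m^c$ coincide for $C_lW$ and for the pair $(C_lW,\b C_lW)$, so that $I^\m\tau_{\rm R}=I^{\m^c}\tau_{\rm R}=I\tau_{\rm R}$ in each case. The first identity then reduces to $\log I\tau_{\rm R}((C_lW,g_C);\rho_0)=-\log I\tau_{\rm R}((C_lW,\b C_lW,g_C);\rho_0)$, which is precisely Proposition \ref{p1}.

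If $m=2p$ is even, then $(-1)^m=1$ and I would combine the two lemmas. From Lemma \ref{l2} with perversity $\m^c$ one solves for $\log I^{\m^c}\tau_{\rm R}((C_lW,\b C_lW,g_C);\rho_0)$ in terms of $\log I^{\m^c}\tau_{\rm R}((C_lW,g_C);\rho_0)$, $\log\tau_{\rm R}((\b C_lW,l^2g);\rho_0)$ and $\log\tau(\T^{\m^c}_{2p})$; and from Lemma \ref{l1}, used once with $\m$ and once with $\m^c$, one expresses $\log I^{\m}\tau_{\rm R}((C_lW,g_C);\rho_0)$ and $\log I^{\m^c}\tau_{\rm R}((C_lW,g_C);\rho_0)$ in terms of $\log\tau_{\rm R}((\b C_lW,l^2g);\rho_0)$, $\log I^{\m}\tau_{\rm R}((\Sigma_lW,g_\Sigma);\rho_0)$, $\log I^{\m^c}\tau_{\rm R}((\Sigma_lW,g_\Sigma);\rho_0)$ and $\log\tau(\S^{\m}_{2p}),\log\tau(\S^{\m^c}_{2p})$. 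Substituting and using the explicit values of $\log\tau(\S^\p_{2p})$ and $\log\tau(\T^\p_{2p})$ recorded in Lemmas \ref{l1}--\ref{l2} (in particular $\log\tau(\S^\m_{2p})-\log\tau(\S^{\m^c}_{2p})=2(-1)^p\log l^{r_p/2}$ and $\log\tau(\T^{\m^c}_{2p})=(-1)^{p+1}\log l^{r_p/2}$), all the $\log l$ and $\log 2$ terms cancel and the target identity $\log I^\m\tau_{\rm R}((C_lW,g_C);\rho_0)=\log I^{\m^c}\tau_{\rm R}((C_lW,\b C_lW,g_C);\rho_0)$ becomes equivalent to the single relation
\[
\log I^{\m^c}\tau_{\rm R}((\Sigma_lW,g_\Sigma);\rho_0)-\log I^{\m}\tau_{\rm R}((\Sigma_lW,g_\Sigma);\rho_0)=2\log\tau_{\rm R}((\b C_lW,l^2g);\rho_0).
\]
This last relation is the Poincar\'e duality for the intersection torsion of the \emph{closed} oriented pseudomanifold $\Sigma_lW$, which has odd dimension $2p+1$ and an isolated singularity with link $W$; it is the odd-dimensional case of Dar's duality theorem \cite{Dar1} 2.8, whose ``link'' contribution one identifies --- exactly as the normalization constants are computed in the proofs of Lemmas \ref{ll}--\ref{l2} via the $L^2$ de Rham maps --- with $2\log\tau_{\rm R}((\b C_lW,l^2g);\rho_0)$. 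Alternatively it can be checked directly by comparing the self-dual $L^2$ homology bases of $\Sigma_lW$ for the two complementary perversities, which differ only in the middle degree because one uses the dual boundary condition there, as in \cite{Che3}.

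The main obstacle is the even case, and within it the suspension relation displayed above: one must confirm that Dar's duality \cite{Dar1} 2.8, stated for pseudomanifolds without boundary, applies to $\Sigma_lW$, and one must keep careful track of the $l$-dependent normalization constants that enter through the de Rham maps (as in the calculations of Lemmas \ref{l1} and \ref{l2}) so that the link term comes out exactly as $2\log\tau_{\rm R}((\b C_lW,l^2g);\rho_0)$; once this is in place the remaining algebraic bookkeeping is routine. The odd case, by contrast, reduces with no further computation to Proposition \ref{p1}.
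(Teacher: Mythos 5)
Your proposal is correct and follows essentially the same route as the paper: the odd case reduces to Proposition \ref{p1}, and the even case combines Lemmas \ref{l1} and \ref{l2} with Dar's duality \cite{Dar1} 2.8 for the odd-dimensional closed pseudomanifold $\Sigma_l W$. The only point to tidy up is your final suspension relation: as used in the paper, Dar's theorem gives $\log I^{\m^c}\tau_{\rm R}(\Sigma_l W)=\log I^{\m}\tau_{\rm R}(\Sigma_l W)$ outright (no link term), and the right-hand side $2\log\tau_{\rm R}((\b C_l W,l^2 g);\rho_0)$ vanishes separately because $\b C_l W\cong W$ is a closed oriented even-dimensional manifold, so both sides of your relation are zero rather than being matched through a ``link contribution''.
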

\begin{proof} The proof  follows by Proposition \ref{p1} and the two previous lemmas. \end{proof}

\vskip .2in



\end{document}